\newtheorem{theorem}{Theorem}
\theoremstyle{plain}
\newtheorem{definition}{Definition}
\newtheorem{lemma}{Lemma}
\numberwithin{equation}{section}
\begin{document}
\title[Harmonically convex functions on the co-ordinates]{Hermite-Hadamard
type inequalities for harmonically convex functions on the co-ordinates}
\author{Erhan SET$^{\clubsuit }$}
\address{$^{\clubsuit }$Department of Mathematics, Faculty of Arts and
Sciences, Ordu University, 52200, Ordu, Turkey}
\email{erhanset@yahoo.com}
\author{\.{I}mdat \.{I}\c{s}can$^{\blacktriangledown }$}
\address{$^{\blacktriangledown }$Department of Mathematics, Faculty of Arts
and Sciences, Giresun University, 28100, Giresun, Turkey.}
\email{imdat.iscan@giresun.edu.tr, imdati@yahoo.com}
\subjclass[2000]{Primary 26D15; Secondary 26A51}
\keywords{Harmonically convex function, Hermite-Hadamard type inequality}

\begin{abstract}
A function $f:\left[ a,b\right] \subset \mathbb{R}\rightarrow \mathbb{R}$ is
said to be convex if whenever $x,y\in \left[ a,b\right] $ and $t\in \left[
0,1\right] $, the following inequality holds:%
\begin{equation*}
f(tx+(1-t)y)\leq tf(x)+(1-t)f(y)
\end{equation*}%
In recent years, new classes of convex functions have been introduced in
order to generalize the results and to obtain new estimations. We also
introduce the concept of harmonically convex functions on the co-ordinates.
Also, we establish some inequalities of Hermit-Hadamard type as S.S.
Dragomir' s results in Theorem \ref{T12} and other Hermit-Hadamard type
inequalities\ for these classes of functions.
\end{abstract}

\maketitle

\section{Introduction}

Let $f:I\subseteq \mathbb{R\rightarrow R}$ be a convex mapping defined on
the interval $I$ of real numbers and $a,b\in I$, with $a<b.$ The following
double inequality is well known in the literature as the Hermite-Hadamard
inequality \cite{DP}:%
\begin{equation*}
f\left( \frac{a+b}{2}\right) \leq \frac{1}{b-a}\int_{a}^{b}f\left( x\right)
dx\leq \frac{f\left( a\right) +f\left( b\right) }{2}.
\end{equation*}%
In \cite{iscan1}, Iscan gave definition of harmonically convexity as follows:

\begin{definition}
Let $I\subset 
%TCIMACRO{\U{211d} }%
%BeginExpansion
\mathbb{R}
%EndExpansion
\backslash \left\{ 0\right\} $ be a real interval. A function $%
f:I\rightarrow 
%TCIMACRO{\U{211d} }%
%BeginExpansion
\mathbb{R}
%EndExpansion
$ is said to be harmonically convex, if \ 
\begin{equation}
f\left( \frac{xy}{tx+(1-t)y}\right) \leq tf(y)+(1-t)f(x)  \label{1-2}
\end{equation}%
for all $x,y\in I$ and $t\in \lbrack 0,1]$. If the inequality in (\ref{1-2})
is reversed, then $f$ is said to be harmonically concave.
\end{definition}

The following result of the Hermite-Hadamard type for harmonically convex
functions holds.

\begin{theorem}
\label{T11}\cite{iscan1} Let $f:I\subset 
%TCIMACRO{\U{211d} }%
%BeginExpansion
\mathbb{R}
%EndExpansion
\backslash \left\{ 0\right\} \rightarrow 
%TCIMACRO{\U{211d} }%
%BeginExpansion
\mathbb{R}
%EndExpansion
$ be a harmonically convex function and $a,b\in I$ with $a<b.$ If $f\in
L[a,b]$ then the following inequalities hold 
\begin{equation}
f\left( \frac{2ab}{a+b}\right) \leq \frac{ab}{b-a}\dint\limits_{a}^{b}\frac{%
f(x)}{x^{2}}dx\leq \frac{f(a)+f(b)}{2}.  \label{1-3}
\end{equation}%
The \ above inequalities are sharp.
\end{theorem}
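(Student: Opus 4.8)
The plan is to reduce both inequalities in \eqref{1-3} to a single integral over $[0,1]$ by means of a substitution that linearizes the harmonic structure. First I would introduce the change of variable $x=\frac{ab}{ta+(1-t)b}$, so that $\frac{1}{x}=\frac{t}{b}+\frac{1-t}{a}$ runs linearly in $t$; as $t$ increases from $0$ to $1$, the point $x$ traverses $[a,b]$. Differentiating yields $\frac{dx}{x^{2}}=\frac{b-a}{ab}\,dt$, which converts the weighted integral into
\begin{equation*}
\frac{ab}{b-a}\int_{a}^{b}\frac{f(x)}{x^{2}}\,dx=\int_{0}^{1}f\left( \frac{ab}{ta+(1-t)b}\right) dt.
\end{equation*}
Everything then reduces to bounding this last integral from above and below.

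For the upper bound I would apply the harmonic convexity inequality \eqref{1-2} directly with the endpoints $x=a$, $y=b$ and parameter $t$, obtaining $f\left( \frac{ab}{ta+(1-t)b}\right) \leq tf(b)+(1-t)f(a)$. Integrating over $t\in[0,1]$ and using $\int_{0}^{1}t\,dt=\int_{0}^{1}(1-t)\,dt=\tfrac{1}{2}$ gives the right-hand inequality $\frac{ab}{b-a}\int_{a}^{b}\frac{f(x)}{x^{2}}\,dx\leq \frac{f(a)+f(b)}{2}$.

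For the lower bound the key observation is that the substitution carries a built-in symmetry. Writing $u(t)=\frac{ab}{ta+(1-t)b}$, a short computation gives $\frac{1}{u(t)}+\frac{1}{u(1-t)}=\frac{a+b}{ab}$, so the harmonic mean of the symmetric pair $u(t)$ and $u(1-t)$ equals $\frac{2ab}{a+b}$ for every $t$. Applying \eqref{1-2} to the two points $u(t)$ and $u(1-t)$ with parameter $\tfrac{1}{2}$ then yields $f\left( \frac{2ab}{a+b}\right) \leq \frac{1}{2}\big[ f(u(t))+f(u(1-t))\big]$. Integrating this over $[0,1]$ and noting that $\int_{0}^{1}f(u(1-t))\,dt=\int_{0}^{1}f(u(t))\,dt$ by the reflection $t\mapsto 1-t$ produces the left-hand inequality.

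The individual steps are all short, so I expect the only thing requiring care to be the bookkeeping of which endpoint carries which weight in \eqref{1-2} — the definition pairs the weight $t$ with $f(y)$ and $1-t$ with $f(x)$ — so that the substitution and the convexity estimate are applied consistently. Sharpness would then follow by testing $f$ equal to a constant, for which all three quantities in \eqref{1-3} coincide.
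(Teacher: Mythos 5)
Your proof is correct, and since the paper states Theorem \ref{T11} without giving a proof (it is quoted from \cite{iscan1}), there is no internal argument to compare against; your route --- the substitution $x=\frac{ab}{ta+(1-t)b}$ turning the middle term of (\ref{1-3}) into $\int_{0}^{1}f\left(\frac{ab}{ta+(1-t)b}\right)dt$, the endpoint application of (\ref{1-2}) for the right inequality, and the $t\leftrightarrow 1-t$ symmetry with parameter $\tfrac{1}{2}$ for the left --- is precisely the standard argument used in the cited source. Your weight bookkeeping (pairing $t$ with $f(b)$ and $1-t$ with $f(a)$ when $x=a$, $y=b$) and the sharpness check via constant $f$ are both in order.
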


For recent results and new inequalities related to harmonically convex
functions, see \cite{iscan1, iscan2, iscan3}.

Let us now consider a bidemensional interval $\Delta =:\left[ a,b\right]
\times \left[ c,d\right] $ in $\mathbb{R}^{2}$ with $a<b$ and $c<d$. A
function $f:\Delta \rightarrow \mathbb{R}$ is said to be convex on $\Delta $
if the following inequality:%
\begin{equation*}
f(tx+\left( 1-t\right) z,ty+\left( 1-t\right) w)\leq tf\left( x,y\right)
+\left( 1-t\right) f\left( z,w\right)
\end{equation*}%
holds, for all $\left( x,y\right) ,\left( z,w\right) \in \Delta $ and $t\in %
\left[ 0,1\right] .$ A function $f:\Delta \rightarrow \mathbb{R}$ is said to
be on the co-ordinates on $\Delta $ if the partial mappings $f_{y}:\left[ a,b%
\right] \rightarrow \mathbb{R},$ \ $f_{y}\left( u\right) =f\left( u,y\right) 
$ and $f_{x}:\left[ c,d\right] \rightarrow \mathbb{R},$ \ $f_{x}\left(
v\right) =f\left( x,v\right) $ are convex where defined for all $x\in \left[
a,b\right] $ and $y\in \left[ c,d\right] \ $(see \cite[p. 317]{DP}).

Also, in \cite{D}, Dragomir establish the following similar inequality of
Hadamard's type for co-ordinated convex mapping on a rectangle from the
plane $\mathbb{R}^{2}.$

\begin{theorem}
\label{T12} Suppose that $f:\Delta \rightarrow \mathbb{R}$ is co-ordinated
convex on $\Delta .$Then one has the inequalities:%
\begin{equation}
\begin{array}{l}
f\left( \dfrac{a+b}{2},\dfrac{c+d}{2}\right) \\ 
\\ 
\leq \dfrac{1}{2}\left[ \dfrac{1}{b-a}\dint_{a}^{b}f\left( x,\dfrac{c+d}{2}%
\right) dx+\dfrac{1}{d-c}\dint_{c}^{d}f\left( \dfrac{a+b}{2},y\right) dy%
\right] \\ 
\\ 
\leq \dfrac{1}{\left( b-a\right) \left( d-c\right) }\dint_{a}^{b}%
\dint_{c}^{d}f\left( x,y\right) dydx \\ 
\\ 
\leq \dfrac{f\left( a,c\right) +f\left( a,d\right) +f\left( b,c\right)
+f\left( b,d\right) }{4}.%
\end{array}
\label{E8}
\end{equation}%
The above inequalities are sharp.
\end{theorem}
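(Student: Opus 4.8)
The plan is to reduce the two-dimensional statement to repeated applications of the classical one-dimensional Hermite-Hadamard inequality (the double inequality recalled in the Introduction), applied to the partial mappings, exploiting exactly the definition of co-ordinated convexity. Since $f$ is co-ordinated convex, for each fixed $x\in[a,b]$ the map $y\mapsto f(x,y)$ is convex on $[c,d]$, and for each fixed $y\in[c,d]$ the map $x\mapsto f(x,y)$ is convex on $[a,b]$. In particular these partial maps are integrable, so every integral occurring in $(\ref{E8})$ is well defined and the double integral may be evaluated as an iterated integral in either order.

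First I would establish the two right-most (upper) bounds. Applying the classical Hermite-Hadamard inequality to $y\mapsto f(x,y)$ on $[c,d]$ and integrating the resulting upper estimate over $x\in[a,b]$ gives
\[
\frac{1}{(b-a)(d-c)}\int_a^b\int_c^d f(x,y)\,dy\,dx \leq \frac{1}{2(b-a)}\int_a^b\left[f(x,c)+f(x,d)\right]dx.
\]
Applying the one-dimensional inequality once more to $x\mapsto f(x,c)$ and to $x\mapsto f(x,d)$ on $[a,b]$ bounds the right-hand side by the corner average $\tfrac14\left[f(a,c)+f(a,d)+f(b,c)+f(b,d)\right]$, which is precisely the last inequality in $(\ref{E8})$.

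Next I would produce the two left-most (lower) bounds. Using the left half of Hermite-Hadamard for $y\mapsto f(x,y)$ and integrating over $x$ yields $\frac{1}{b-a}\int_a^b f(x,\frac{c+d}{2})\,dx$ as a lower bound for the double integral; symmetrically, working first in the variable $x$ and then integrating over $y$ gives $\frac{1}{d-c}\int_c^d f(\frac{a+b}{2},y)\,dy$ as a lower bound. Averaging these two lower bounds produces the middle (second) term of $(\ref{E8})$ and shows it does not exceed the double integral. Finally, applying Hermite-Hadamard to the single-variable maps $x\mapsto f(x,\frac{c+d}{2})$ on $[a,b]$ and $y\mapsto f(\frac{a+b}{2},y)$ on $[c,d]$ bounds $f(\frac{a+b}{2},\frac{c+d}{2})$ below by each of these two integral means; averaging gives the first inequality.

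The invocation of the one-dimensional inequality is routine; the only genuine bookkeeping — and the step I expect to demand care — is assembling the four separate single-variable estimates into the correct symmetric chain, making sure that the averaging of the ``$x$-first'' and ``$y$-first'' versions lands exactly on the prescribed middle term rather than on some asymmetric expression. Sharpness should then be inherited directly from the sharpness of the one-dimensional inequality, the extremal case being attained by affine $f$.
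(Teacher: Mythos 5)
Your proposal is correct and is exactly the standard argument: apply the one-dimensional Hermite-Hadamard inequality to the convex partial maps, integrate, and average the ``$x$-first'' and ``$y$-first'' estimates, which is precisely the scheme the paper itself uses to prove its harmonic analogue (Theorem \ref{2.2}), and also the proof in Dragomir \cite{D}, from which the paper quotes Theorem \ref{T12} without reproving it. No gaps beyond the mild integrability remark, which the paper glosses over in the same way.
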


Similar results can be found in \cite{DP}-\cite{Hwang} and \cite{Latif}-\cite%
{sarikaya}.

Motivated by \cite{DP, D, iscan1, sarikaya}, we will explore a new concept
of co-ordinated convex functions which is called harmonically convex
functions on the co-ordinates. By virtue of this new concept, we except to
present another interesting and important Hermite-Hadamard inequalities for
these classes of functions.

\section{Main Results}

\begin{definition}
\label{def1}Let us consider the bidimensional interval $\Delta =\left[ a,b%
\right] \times \left[ c,d\right] $ in $\left( 0,\infty \right) \times \left(
0,\infty \right) $ with $a<b$ and $c<d$. A function $f:\Delta \rightarrow 
%TCIMACRO{\U{211d} }%
%BeginExpansion
\mathbb{R}
%EndExpansion
$ is said to be harmonically convex on $\Delta $ if the following inequality:%
\begin{eqnarray}
f\left( \frac{xz}{tz+(1-t)x},\frac{yw}{tw+(1-t)y}\right) &=&f\left( \frac{1}{%
\frac{t}{x}+\frac{1-t}{z}},\frac{1}{\frac{t}{y}+\frac{1-t}{w}}\right)
\label{2-1} \\
&\leq &tf(x,y)+(1-t)f(z,w)  \notag
\end{eqnarray}%
holds, for all $(x,y),\ (z,w)\in \Delta $ and $t\in \left[ 0,1\right] $. If
the inequality in (\ref{2-1}) is reversed, then $f$ is said to be
harmonically concave on $\Delta $.
\end{definition}

\begin{definition}
\label{def2}Let us consider the bidimensional interval $\Delta =\left[ a,b%
\right] \times \left[ c,d\right] $ in $\left( 0,\infty \right) \times \left(
0,\infty \right) $ with $a<b$ and $c<d$. A function $f:\Delta \rightarrow 
%TCIMACRO{\U{211d} }%
%BeginExpansion
\mathbb{R}
%EndExpansion
$ is said to be harmonically convex on the co-ordinates if the partial
mappings $f_{y}:\left[ a,b\right] \rightarrow \mathbb{R}$, $f_{y}(u):=f(u,y)$
and $f_{x}:\left[ c,d\right] \rightarrow 
%TCIMACRO{\U{211d} }%
%BeginExpansion
\mathbb{R}
%EndExpansion
$, $f_{x}(v):=f(x,v)$ are harmonically convex where defined for all $y\in %
\left[ c,d\right] $ and $x\in \left[ a,b\right] .$
\end{definition}

We note that if $f_{x}$ and $f_{y}$ are convex and nondecreasing function
then $f_{x}$ and $f_{y}$ are harmonically convex. If $f_{x}$ and $f_{y}$ are
harmonically convex and nonincreasing function then $f_{x}$ and $f_{y}$ are
convex. Also, in definition \ref{def1} and definition \ref{def2}, $\left(
0,-\infty \right) \times \left( 0,\infty \right) $ or $\left( 0,-\infty
\right) \times \left( 0,-\infty \right) $ or $\left( 0,\infty \right) \times
\left( 0,-\infty \right) $ instead of $\left( 0,\infty \right) \times \left(
0,\infty \right) $ can be chosen.

The following lemma holds:

\begin{lemma}
Every harmonically convex function $f:\Delta \rightarrow 
%TCIMACRO{\U{211d} }%
%BeginExpansion
\mathbb{R}
%EndExpansion
$ is harmonically convex on the co-ordinates, but the converse is not
generally true.
\end{lemma}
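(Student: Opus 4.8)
The plan is to prove the two assertions separately: the implication (harmonic convexity on $\Delta$ $\Rightarrow$ harmonic convexity on the co-ordinates) by a direct specialization of Definition \ref{def1}, and the failure of the converse by exhibiting an explicit counterexample. For the implication, suppose $f$ satisfies (\ref{2-1}) for all $(x,y),(z,w)\in\Delta$ and $t\in[0,1]$. To show that the partial map $f_y$ is harmonically convex I would fix the second coordinate by choosing $w=y$; the key observation is that the second argument on the left then collapses,
$$\frac{yw}{tw+(1-t)y}=\frac{y^{2}}{ty+(1-t)y}=y,$$
so (\ref{2-1}) reduces to $f\left(\frac{xz}{tz+(1-t)x},y\right)\leq tf(x,y)+(1-t)f(z,y)$, which is exactly the harmonic convexity of $f_y(u)=f(u,y)$. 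Symmetrically, setting $x=z$ makes the first argument collapse to $x$ and yields the harmonic convexity of $f_x$. This direction involves no real obstacle beyond the algebraic simplification.

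For the converse the cleanest strategy is to pass through the substitution that linearizes harmonic convexity: a function $h$ on a subinterval of $(0,\infty)$ is harmonically convex if and only if $s\mapsto h(1/s)$ is convex, and likewise $f$ satisfies (\ref{2-1}) if and only if $g(s,r):=f(1/s,1/r)$ is \emph{jointly} convex on the reciprocal rectangle, while $f$ is harmonically convex on the co-ordinates if and only if $g$ is convex in each variable \emph{separately}. In this language the lemma is precisely the classical fact that separate convexity does not force joint convexity, whose prototype is the bilinear saddle $g(s,r)=sr$. Undoing the substitution suggests the candidate $f(x,y)=\frac{1}{xy}$ on, say, $\Delta=[1,2]\times[1,2]$.

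Finally I would verify this candidate directly. For fixed $y$ one has $f_y(1/s)=s/y$, which is affine and hence convex in $s$, so $f_y$ is harmonically convex; the same computation applies to $f_x$, establishing harmonic convexity on the co-ordinates. To defeat joint harmonic convexity it is enough to display one violation of (\ref{2-1}): taking $(x,y)=(1,2)$, $(z,w)=(2,1)$ and $t=\tfrac12$, both arguments on the left equal $\tfrac{4}{3}$, so the left-hand side is $f(\tfrac{4}{3},\tfrac{4}{3})=\tfrac{9}{16}$, whereas the right-hand side equals $\tfrac12 f(1,2)+\tfrac12 f(2,1)=\tfrac12=\tfrac{8}{16}$, and $\tfrac{9}{16}>\tfrac{8}{16}$. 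The main point requiring care is the \emph{choice} of counterexample: one must select an $f$ whose reciprocal profile is a genuine saddle, since the innocent-looking $f(x,y)=xy$ does \emph{not} work (its reciprocal profile $1/(sr)$ turns out to be jointly convex, so that $xy$ is in fact harmonically convex on $\Delta$), and then one must confirm both that each partial map is harmonically convex and that a concrete pair of points breaks (\ref{2-1}).
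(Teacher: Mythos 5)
Your proof is correct, and while its first half coincides with the paper's argument, the second half takes a genuinely different route. For the implication, the paper does exactly what you do: it fixes one coordinate and collapses the corresponding argument via $\frac{x^{2}}{tx+(1-t)x}=x$, which is your $w=y$ (resp.\ $x=z$) specialization of (\ref{2-1}). For the converse, the paper follows Dragomir's classical template and exhibits the product $f(x,y)=(x-1)(y-2)$ on $[1,3]\times[2,3]$, asserting co-ordinate harmonic convexity as ``obvious'' and checking failure of (\ref{2-1}) at the pair $(1,3),(2,3)$ for all $t\in(0,1)$; its printed computation actually contains slips (the harmonic mean $\frac{2}{2t+(1-t)}$ equals $\frac{2}{1+t}$, not $\frac{2}{1-t}$, and $tf(1,3)+(1-t)f(2,3)=1-t$, not $0$, since $f(2,3)=1$ --- the example is repairable, e.g.\ with the pair $(1,3),(2,2)$, but as written the displayed strict inequality is not established). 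You instead make the reciprocal substitution explicit: $g(s,r)=f(1/s,1/r)$ turns (\ref{2-1}) into joint convexity of $g$ and co-ordinate harmonic convexity into separate convexity of $g$, reducing the lemma to the classical fact that separate convexity does not imply joint convexity; the saddle $g(s,r)=sr$ then yields $f(x,y)=1/(xy)$ on $[1,2]\times[1,2]$, and your numerical check at $(x,y)=(1,2)$, $(z,w)=(2,1)$, $t=\tfrac12$ (namely $\tfrac{9}{16}>\tfrac{8}{16}$) is airtight, as is your warning that $f(x,y)=xy$ would fail as a counterexample because $1/(sr)$ is jointly convex on $(0,\infty)^{2}$. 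Your route buys a transfer principle that both explains why a counterexample must exist and generates them systematically, at the cost of invoking the one-variable reciprocal characterization; the paper's route is more self-contained and elementary but, in its published form, less carefully verified than yours.
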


\begin{proof}
Suppose that $f:\Delta \rightarrow 
%TCIMACRO{\U{211d} }%
%BeginExpansion
\mathbb{R}
%EndExpansion
$ harmonically convex on $\Delta $. Consider $f_{x}:\left[ c,d\right]
\rightarrow 
%TCIMACRO{\U{211d} }%
%BeginExpansion
\mathbb{R}
%EndExpansion
$, $f_{x}(v)=f(x,v)$. Then for all $t\in \left[ 0,1\right] $ and $v,w\in %
\left[ c,d\right] $ one has:%
\begin{eqnarray*}
f_{x}\left( \frac{vw}{tw+(1-t)v}\right) &=&f\left( x,\frac{vw}{tw+(1-t)v}%
\right) \\
&=&f\left( \frac{x^{2}}{tx+(1-t)x},\frac{vw}{tw+(1-t)v}\right) \\
&\leq &tf(x,v)+(1-t)f(x,w) \\
&=&tf_{x}(v)+(1-t)f_{x}(w)
\end{eqnarray*}%
which shows the harmonically convexity of $f_{x}$.

The fact that $f_{y}:\left[ a,b\right] \rightarrow \mathbb{R}$, $%
f_{y}(u)=f(u,y)$ is also harmonically convex on $\left[ a,b\right] $ for all 
$y\in \left[ c,d\right] $ goes likewise and we shall omit the details.

Now, consider $f:\left[ 1,3\right] \times \left[ 2,3\right] \rightarrow %
\left[ 0,\infty \right) $ given $f(x,y)=(x-1)(y-2)$. It is obvious that $f$
is harmonically convex on the co-ordinates but is not harmonically convex on 
$\left[ 1,3\right] \times \left[ 2,3\right] $. Indeed, for $(1,3),\ (2,3)\in %
\left[ 1,3\right] \times \left[ 2,3\right] $ and $t\in \left( 0,1\right) $
we have%
\begin{equation*}
f\left( \frac{2}{2t+(1-t)1},\frac{9}{3t+3(1-t)}\right) =f\left( \frac{2}{1-t}%
,3\right) =\frac{1+t}{1-t}
\end{equation*}%
\begin{equation*}
tf(1,3)+(1-t)f(2,3)=0.
\end{equation*}%
Thus for all $t\in \left( 0,1\right) $, we have 
\begin{equation*}
f\left( \frac{2}{2t+(1-t)1},\frac{9}{3t+3(1-t)}\right) >tf(1,3)+(1-t)f(2,3)
\end{equation*}%
which shows that $f$ is not harmonically convex on $\left[ 1,3\right] \times %
\left[ 2,3\right] $.
\end{proof}

The following\ inequalities of Hermite-Hadamard type hold.

\begin{theorem}
\label{2.2} Suppose that $f:\Delta =\left[ a,b\right] \times \left[ c,d%
\right] \subseteq \left( 0,\infty \right) \times \left( 0,\infty \right)
\rightarrow 
%TCIMACRO{\U{211d} }%
%BeginExpansion
\mathbb{R}
%EndExpansion
$ is harmonically convex on the co-ordinates on $\Delta $. Then one has the
inequalities: 
\begin{eqnarray}
&&f\left( \frac{2ab}{a+b},\frac{2cd}{c+d}\right)  \label{2-2} \\
&\leq &\frac{1}{2}\left[ \frac{ab}{b-a}\dint\limits_{a}^{b}\frac{f\left( x,%
\frac{2cd}{c+d}\right) }{x^{2}}dx+\frac{cd}{d-c}\dint\limits_{c}^{d}\frac{%
f\left( \frac{2ab}{a+b},y\right) }{y^{2}}dy\right]  \notag \\
&\leq &\frac{abcd}{\left( b-a\right) \left( d-c\right) }\dint\limits_{a}^{b}%
\dint\limits_{c}^{d}\frac{f(x,y)}{\left( xy\right) ^{2}}dxdy  \notag \\
&\leq &\frac{1}{4}\left[ \frac{ab}{b-a}\dint\limits_{a}^{b}\frac{f\left(
x,c\right) }{x^{2}}dx+\frac{ab}{b-a}\dint\limits_{a}^{b}\frac{f\left(
x,d\right) }{x^{2}}dx\right.  \notag \\
&&+\left. \frac{cd}{d-c}\dint\limits_{c}^{d}\frac{f\left( a,y\right) }{y^{2}}%
dy+\frac{cd}{d-c}\dint\limits_{c}^{d}\frac{f\left( b,y\right) }{y^{2}}dy%
\right]  \notag \\
&\leq &\frac{f(a,c)+f(a,d)+f(b,c)+f(b,d)}{4}.  \notag
\end{eqnarray}%
The above inequalities are sharp.
\end{theorem}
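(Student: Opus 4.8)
The plan is to reduce the whole two-dimensional statement to repeated use of the one-dimensional harmonic Hermite--Hadamard inequality (\ref{1-3}) of Theorem \ref{T11}, exploiting that, by Definition \ref{def2}, harmonic convexity on the co-ordinates means exactly that every partial mapping $f(\cdot,y)$ and $f(x,\cdot)$ is harmonically convex in one variable. Throughout I abbreviate the harmonic means $H_{1}=\frac{2ab}{a+b}$ and $H_{2}=\frac{2cd}{c+d}$. Each of the four displayed inequalities in (\ref{2-2}) will be produced by applying one half of (\ref{1-3}) to an appropriate one-variable slice and, where necessary, integrating out the remaining variable.

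For the first inequality I would invoke the \emph{left-hand} estimate in (\ref{1-3}) twice. The partial map $g_{1}(x)=f(x,H_{2})$ is harmonically convex on $[a,b]$, so $f(H_{1},H_{2})=g_{1}(H_{1})\le \frac{ab}{b-a}\int_{a}^{b}\frac{f(x,H_{2})}{x^{2}}dx$; likewise $g_{2}(y)=f(H_{1},y)$ is harmonically convex on $[c,d]$ and gives the analogous bound by the second weighted integral. Averaging the two estimates yields the first line of (\ref{2-2}).

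The middle two inequalities come from applying (\ref{1-3}) to the inner variable and then integrating the outer one against the weight $1/x^{2}$ or $1/y^{2}$. For the second inequality, for each fixed $x$ the map $f(x,\cdot)$ is harmonically convex on $[c,d]$, so the left half of (\ref{1-3}) gives $f(x,H_{2})\le \frac{cd}{d-c}\int_{c}^{d}\frac{f(x,y)}{y^{2}}dy$; multiplying by $\frac{ab}{(b-a)x^{2}}$ and integrating in $x$ bounds the first weighted integral by the double integral, and the symmetric slicing in $y$ handles the second term, so averaging gives the second line. The third inequality is the same manoeuvre carried out with the \emph{right} half of (\ref{1-3}): for fixed $x$ one has $\frac{cd}{d-c}\int_{c}^{d}\frac{f(x,y)}{y^{2}}dy\le \frac{f(x,c)+f(x,d)}{2}$, and integrating in $x$ produces two of the four endpoint integrals, while slicing in $y$ produces the other two; averaging yields the third line. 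Finally, the fourth inequality applies the right half of (\ref{1-3}) directly to each of the four endpoint functions $f(\cdot,c)$, $f(\cdot,d)$, $f(a,\cdot)$, $f(b,\cdot)$, bounding each weighted integral by the mean of its two vertex values; summing and dividing by $4$ collapses the eight vertex terms, each occurring twice, into the claimed average.

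Since the argument is essentially bookkeeping, I do not anticipate a serious obstacle. The one point needing care is the interchange of the order of integration in the middle steps, justified by Fubini's theorem because $f\in L(\Delta)$ and the weights $1/x^{2},1/y^{2}$ are bounded on $\Delta\subset\left( 0,\infty \right) \times \left( 0,\infty \right)$. Sharpness I would settle by testing the whole chain on $f\equiv 1$: each weighted integral $\frac{ab}{b-a}\int_{a}^{b}\frac{dx}{x^{2}}$ equals $1$, so every quantity in (\ref{2-2}) reduces to the same constant and all four inequalities become equalities, showing the constants cannot be improved.
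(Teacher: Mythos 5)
Your proposal is correct and is essentially identical to the paper's own proof: the paper likewise applies the one-dimensional inequality (\ref{1-3}) to the partial mappings $h_x(y)=f(x,y)$ and $h_y(x)=f(x,y)$, integrates the resulting chain against the weights $\frac{ab}{(b-a)x^{2}}$ and $\frac{cd}{(d-c)y^{2}}$ and averages to get the middle inequalities, then uses the left half of (\ref{1-3}) at the harmonic midpoints for the first inequality and the right half on the four boundary slices for the last, finishing with the same $f\equiv 1$ sharpness check. Your explicit appeal to Fubini is a harmless extra precaution the paper leaves tacit.
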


\begin{proof}
Since $f:\Delta \rightarrow 
%TCIMACRO{\U{211d} }%
%BeginExpansion
\mathbb{R}
%EndExpansion
$ is harmonically convex on the co-ordinates it follows that the mapping $%
h_{x}:\left[ c,d\right] \rightarrow 
%TCIMACRO{\U{211d} }%
%BeginExpansion
\mathbb{R}
%EndExpansion
$, $h_{x}(y)=f(x,y)$ is harmonically convex on $\left[ c,d\right] $ for all $%
x\in \left[ a,b\right] .$ Then by inequality (\ref{1-3}) one has: 
\begin{equation*}
h_{x}\left( \frac{2cd}{c+d}\right) \leq \frac{cd}{d-c}\dint\limits_{c}^{d}%
\frac{h_{x}(y)}{y^{2}}dy\leq \frac{h_{x}(c)+h_{x}(d)}{2},\ x\in \left[ a,b%
\right] .
\end{equation*}%
That is,%
\begin{equation*}
f\left( x,\frac{2cd}{c+d}\right) \leq \frac{cd}{d-c}\dint\limits_{c}^{d}%
\frac{f(x,y)}{y^{2}}dy\leq \frac{f(x,c)+f(x,d)}{2},\ x\in \left[ a,b\right] .
\end{equation*}%
Integrating this inequality on $\left[ a,b\right] $, we have:%
\begin{eqnarray}
&&\frac{ab}{b-a}\dint\limits_{a}^{b}\frac{f\left( x,\frac{2cd}{c+d}\right) }{%
x^{2}}dx  \label{2-2a} \\
&\leq &\frac{abcd}{\left( b-a\right) \left( d-c\right) }\dint\limits_{a}^{b}%
\dint\limits_{c}^{d}\frac{f(x,y)}{\left( xy\right) ^{2}}dxdy  \notag \\
&\leq &\frac{1}{2}\left[ \frac{ab}{b-a}\dint\limits_{a}^{b}\frac{f\left(
x,c\right) }{x^{2}}dx+\frac{ab}{b-a}\dint\limits_{a}^{b}\frac{f\left(
x,d\right) }{x^{2}}dx\right] .  \notag
\end{eqnarray}%
By a similar argument applied for the mapping $h_{y}:\left[ a,b\right]
\rightarrow 
%TCIMACRO{\U{211d} }%
%BeginExpansion
\mathbb{R}
%EndExpansion
$, $h_{y}(x)=f(x,y)$ we get%
\begin{eqnarray}
&&\frac{cd}{d-c}\dint\limits_{c}^{d}\frac{f\left( \frac{2ab}{a+b},y\right) }{%
y^{2}}dy  \label{2-2b} \\
&\leq &\frac{abcd}{\left( b-a\right) \left( d-c\right) }\dint\limits_{a}^{b}%
\dint\limits_{c}^{d}\frac{f(x,y)}{\left( xy\right) ^{2}}dxdy  \notag \\
&\leq &\frac{1}{2}\left[ \frac{cd}{d-c}\dint\limits_{c}^{d}\frac{f\left(
a,y\right) }{y^{2}}dy+\frac{cd}{d-c}\dint\limits_{c}^{d}\frac{f\left(
b,y\right) }{y^{2}}dy\right] .  \notag
\end{eqnarray}%
Summing the inequalities (\ref{2-2a}) and (\ref{2-2b}), we get the second
and the third inequality in (\ref{2-2}).

By the inequality (\ref{1-3}) we also have:%
\begin{equation*}
f\left( \frac{2ab}{a+b},\frac{2cd}{c+d}\right) \leq \frac{ab}{b-a}%
\dint\limits_{a}^{b}\frac{f\left( x,\frac{2cd}{c+d}\right) }{x^{2}}dx
\end{equation*}%
and%
\begin{equation*}
f\left( \frac{2ab}{a+b},\frac{2cd}{c+d}\right) \leq \frac{cd}{d-c}%
\dint\limits_{c}^{d}\frac{f(\frac{2ab}{a+b},y)}{y^{2}}dy
\end{equation*}%
which give, by addition, the first inequality in (\ref{2-2}).

Finally, by the inequality (\ref{1-3}) we also have:%
\begin{eqnarray*}
\frac{ab}{b-a}\dint\limits_{a}^{b}\frac{f\left( x,c\right) }{x^{2}}dx &\leq &%
\frac{f(a,c)+f(b,c)}{2} \\
\frac{ab}{b-a}\dint\limits_{a}^{b}\frac{f\left( x,d\right) }{x^{2}}dx &\leq &%
\frac{f(a,d)+f(b,d)}{2} \\
\frac{cd}{d-c}\dint\limits_{c}^{d}\frac{f\left( a,y\right) }{y^{2}}dy &\leq &%
\frac{f(a,d)+f(a,c)}{2}
\end{eqnarray*}%
and%
\begin{equation*}
\frac{cd}{d-c}\dint\limits_{c}^{d}\frac{f\left( b,y\right) }{y^{2}}dy\leq 
\frac{f(b,c)+f(b,d)}{2}
\end{equation*}%
which give, by addition, the last inequality in (\ref{2-2}).

If in (\ref{2-2}) we choose $f(x)=1$, then (\ref{2-2}) becomes an equality,
which shows that (\ref{2-2}) are sharp.
\end{proof}

\begin{lemma}
\label{2.3}Let $f:\Delta =\left[ a,b\right] \times \left[ c,d\right]
\subseteq \left( 0,\infty \right) \times \left( 0,\infty \right) \rightarrow 
%TCIMACRO{\U{211d} }%
%BeginExpansion
\mathbb{R}
%EndExpansion
$ be a partial differentiable mapping on $\Delta $ with $a<b$ and $c<d$. If $%
\frac{\partial ^{2}f}{\partial t\partial s}\in L(\Delta )$, then the
following equality holds%
\begin{eqnarray*}
&&\frac{f(a,c)+f(a,d)+f(b,c)+f(b,d)}{4}+\frac{abcd}{\left( b-a\right) \left(
d-c\right) }\dint\limits_{a}^{b}\dint\limits_{c}^{d}\frac{f(x,y)}{\left(
xy\right) ^{2}}dxdy \\
&&-\frac{1}{2}\left[ \frac{ab}{b-a}\dint\limits_{a}^{b}\frac{f\left(
x,c\right) }{x^{2}}dx+\frac{ab}{b-a}\dint\limits_{a}^{b}\frac{f\left(
x,d\right) }{x^{2}}dx\right. \\
&&\left. +\frac{cd}{d-c}\dint\limits_{c}^{d}\frac{f\left( a,y\right) }{y^{2}}%
dy+\frac{cd}{d-c}\dint\limits_{c}^{d}\frac{f\left( b,y\right) }{y^{2}}dy%
\right] \\
&=&\frac{abcd(b-a)(d-c)}{4}\dint\limits_{0}^{1}\dint\limits_{0}^{1}\frac{%
(1-2t)(1-2s)}{\left( A_{t}B_{s}\right) ^{2}}\frac{\partial ^{2}f}{\partial
t\partial s}\left( \frac{ab}{A_{t}},\frac{cd}{B_{s}}\right) dtds,
\end{eqnarray*}%
where $A_{t}=tb+(1-t)a$ and $B_{s}=sd+(1-s)c.$
\end{lemma}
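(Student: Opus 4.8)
The plan is to begin from the double integral on the right-hand side and to recognise its integrand as, up to a multiplicative constant, the total mixed second derivative of the composite map $G(t,s):=f\!\left(\frac{ab}{A_{t}},\frac{cd}{B_{s}}\right)$. Since $\frac{\partial}{\partial t}\frac{ab}{A_{t}}=-\frac{ab(b-a)}{A_{t}^{2}}$ and $\frac{\partial}{\partial s}\frac{cd}{B_{s}}=-\frac{cd(d-c)}{B_{s}^{2}}$, two applications of the chain rule give
\begin{equation*}
\frac{\partial^{2}G}{\partial t\,\partial s}(t,s)=\frac{ab\,cd\,(b-a)(d-c)}{(A_{t}B_{s})^{2}}\,\frac{\partial^{2}f}{\partial t\,\partial s}\!\left(\frac{ab}{A_{t}},\frac{cd}{B_{s}}\right).
\end{equation*}
Substituting this relation into the right-hand side cancels every constant except the factor $\frac{1}{4}$, so the claim reduces to verifying that $\frac{1}{4}\int_{0}^{1}\int_{0}^{1}(1-2t)(1-2s)\,\frac{\partial^{2}G}{\partial t\,\partial s}(t,s)\,dt\,ds$ equals the left-hand side of the asserted equality.

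Next I would evaluate this reduced integral by integrating by parts twice, first in $t$ with $s$ held fixed and then in $s$. Each one-dimensional step uses the elementary identity $\int_{0}^{1}(1-2\tau)\,g'(\tau)\,d\tau=-g(1)-g(0)+2\int_{0}^{1}g(\tau)\,d\tau$, which is the same mechanism that produces the single-variable Hermite--Hadamard identity for harmonically convex functions. The $t$-integration yields the boundary terms $\frac{\partial G}{\partial s}(1,s)$ and $\frac{\partial G}{\partial s}(0,s)$ together with $2\,\frac{\partial}{\partial s}\int_{0}^{1}G(t,s)\,dt$; integrating the result against $(1-2s)$ then produces the four corner values and a collection of single and double integrals. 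Because $A_{0}=a$, $A_{1}=b$, $B_{0}=c$ and $B_{1}=d$, the arguments $\frac{ab}{A_{t}}$ and $\frac{cd}{B_{s}}$ take the endpoint values $b,a$ and $d,c$, which is precisely what turns the boundary evaluations into $f(a,c)$, $f(a,d)$, $f(b,c)$ and $f(b,d)$.

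Finally I would restore the integrals over $[a,b]$ and $[c,d]$ through the substitutions $x=\frac{ab}{A_{t}}$ and $y=\frac{cd}{B_{s}}$, whose differentials $dt=-\frac{ab}{(b-a)x^{2}}\,dx$ and $ds=-\frac{cd}{(d-c)y^{2}}\,dy$ reproduce exactly the weights $\frac{ab}{b-a}\frac{1}{x^{2}}$, $\frac{cd}{d-c}\frac{1}{y^{2}}$ and $\frac{abcd}{(b-a)(d-c)}\frac{1}{(xy)^{2}}$ appearing in the statement, the reversal of the limits of integration absorbing the two minus signs. Dividing by $4$ and collecting terms then matches the left-hand side term by term.

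The difficulty here is bookkeeping rather than anything conceptual: one must track the several sign changes coming from the chain rule, from the boundary contributions in the two integrations by parts, and from the orientation reversal in the substitution, and one must justify differentiating $\int_{0}^{1}G(t,s)\,dt$ under the integral sign in the intermediate step, for which the hypothesis $\frac{\partial^{2}f}{\partial t\,\partial s}\in L(\Delta)$ is exactly what is needed. I expect the main obstacle to be carrying out the iterated integration by parts in a consistent order while keeping the two families of weights straight; once that is done the algebra telescopes cleanly to the desired identity.
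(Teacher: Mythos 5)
Your proposal is correct and takes essentially the same route as the paper's proof: both evaluate the right-hand side by two successive integrations by parts (your identity $\int_{0}^{1}(1-2\tau)g'(\tau)\,d\tau=-g(1)-g(0)+2\int_{0}^{1}g(\tau)\,d\tau$ is exactly the mechanism the paper applies, first in $t$, then in $s$) and then recover the weighted integrals via the substitutions $x=\frac{ab}{A_{t}}$, $y=\frac{cd}{B_{s}}$. Your repackaging through the composite map $G(t,s)=f\left(\frac{ab}{A_{t}},\frac{cd}{B_{s}}\right)$ and the chain rule is cleaner bookkeeping, but it is not a different argument.
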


\begin{proof}
By integration by parts, we get%
\begin{equation}
\frac{abcd(b-a)(d-c)}{4}\dint\limits_{0}^{1}\dint\limits_{0}^{1}\frac{%
(1-2t)(1-2s)}{\left( A_{t}B_{s}\right) ^{2}}\frac{\partial ^{2}f}{\partial
t\partial s}\left( \frac{ab}{A_{t}},\frac{cd}{B_{s}}\right) dtds
\label{2-3a}
\end{equation}%
\begin{eqnarray*}
&=&\frac{cd(d-c)}{4}\dint\limits_{0}^{1}\frac{(2s-1)}{B_{s}^{2}} \\
&&\times \left\{ \left. (1-2t)\frac{\partial f}{\partial s}\left( \frac{ab}{%
A_{t}},\frac{cd}{B_{s}}\right) \right\vert _{0}^{1}+2\dint\limits_{0}^{1}%
\frac{\partial f}{\partial s}\left( \frac{ab}{A_{t}},\frac{cd}{B_{s}}\right)
dt\right\} ds \\
&=&\frac{cd(d-c)}{4}\dint\limits_{0}^{1}\frac{(1-2s)}{B_{s}^{2}}\left[ \frac{%
\partial f}{\partial s}\left( a,\frac{cd}{B_{s}}\right) +\frac{\partial f}{%
\partial s}\left( b,\frac{cd}{B_{s}}\right) \right] ds \\
&&+\frac{cd(d-c)}{2}\dint\limits_{0}^{1}\dint\limits_{0}^{1}\frac{(2s-1)}{%
B_{s}^{2}}\frac{\partial f}{\partial s}\left( \frac{ab}{A_{t}},\frac{cd}{%
B_{s}}\right) dtds
\end{eqnarray*}%
Thus, again by integration by parts in the right hand side of (\ref{2-3a}),
it follows that 
\begin{eqnarray*}
&=&\frac{1}{4}(2s-1)\left. \left[ f\left( a,\frac{cd}{B_{s}}\right) +f\left(
b,\frac{cd}{B_{s}}\right) \right] \right\vert _{0}^{1}-\frac{1}{2}%
\dint\limits_{0}^{1}f\left( a,\frac{cd}{B_{s}}\right) +f\left( a,\frac{cd}{%
B_{s}}\right) ds \\
&&+\frac{1}{2}\dint\limits_{0}^{1}(1-2s)\left. f\left( \frac{ab}{A_{t}},%
\frac{cd}{B_{s}}\right) \right\vert
_{0}^{1}dt+\dint\limits_{0}^{1}\dint\limits_{0}^{1}f\left( \frac{ab}{A_{t}},%
\frac{cd}{B_{s}}\right) dtds
\end{eqnarray*}%
\begin{eqnarray*}
&=&\frac{1}{4}\left[ f\left( a,c\right) +f\left( b,c\right) +f\left(
a,d\right) +f\left( b,d\right) \right] -\frac{1}{2}\dint\limits_{0}^{1}f%
\left( a,\frac{cd}{B_{s}}\right) +f\left( a,\frac{cd}{B_{s}}\right) ds \\
&&-\frac{1}{2}\dint\limits_{0}^{1}f\left( \frac{ab}{A_{t}},c\right) +f\left( 
\frac{ab}{A_{t}},d\right) dt+\dint\limits_{0}^{1}\dint\limits_{0}^{1}f\left( 
\frac{ab}{A_{t}},\frac{cd}{B_{s}}\right) dtds
\end{eqnarray*}%
\begin{eqnarray*}
&=&\frac{f(a,c)+f(a,d)+f(b,c)+f(b,d)}{4}+\frac{abcd}{\left( b-a\right)
\left( d-c\right) }\dint\limits_{a}^{b}\dint\limits_{c}^{d}\frac{f(x,y)}{%
\left( xy\right) ^{2}}dxdy \\
&&-\frac{1}{2}\left[ \frac{ab}{b-a}\dint\limits_{a}^{b}\frac{f\left(
x,c\right) }{x^{2}}dx+\frac{ab}{b-a}\dint\limits_{a}^{b}\frac{f\left(
x,d\right) }{x^{2}}dx\right. \\
&&\left. +\frac{cd}{d-c}\dint\limits_{c}^{d}\frac{f\left( a,y\right) }{y^{2}}%
dy+\frac{cd}{d-c}\dint\limits_{c}^{d}\frac{f\left( b,y\right) }{y^{2}}dy%
\right] ,
\end{eqnarray*}%
which completes the proof.
\end{proof}

\begin{theorem}
\label{2.5}Let $f:\Delta =\left[ a,b\right] \times \left[ c,d\right]
\subseteq \left( 0,\infty \right) \times \left( 0,\infty \right) \rightarrow 
%TCIMACRO{\U{211d} }%
%BeginExpansion
\mathbb{R}
%EndExpansion
$ be a partial differentiable mapping on $\Delta $ with $a<b$ and $c<d$. If $%
\frac{\partial ^{2}f}{\partial t\partial s}\in L(\Delta )$ and $\left\vert 
\frac{\partial ^{2}f}{\partial t\partial s}\right\vert ^{q}$, $q>1,$ is a
harmonically convex function on the co-ordinates on $\Delta $ then one has
the inequality:%
\begin{eqnarray*}
&&\left\vert \frac{f(a,c)+f(a,d)+f(b,c)+f(b,d)}{4}+\frac{abcd}{\left(
b-a\right) \left( d-c\right) }\dint\limits_{a}^{b}\dint\limits_{c}^{d}\frac{%
f(x,y)}{\left( xy\right) ^{2}}dxdy\right. \\
&&-\frac{1}{2}\left[ \frac{ab}{b-a}\dint\limits_{a}^{b}\frac{f\left(
x,c\right) }{x^{2}}dx+\frac{ab}{b-a}\dint\limits_{a}^{b}\frac{f\left(
x,d\right) }{x^{2}}dx\right. \\
&&\left. \left. +\frac{cd}{d-c}\dint\limits_{c}^{d}\frac{f\left( a,y\right) 
}{y^{2}}dy+\frac{cd}{d-c}\dint\limits_{c}^{d}\frac{f\left( b,y\right) }{y^{2}%
}dy\right] \right\vert
\end{eqnarray*}%
\begin{eqnarray*}
&\leq &\frac{ac(b-a)(d-c)}{4bd\left( p+1\right) ^{2/p}} \\
&&\times \left( \frac{C_{1}\left\vert \frac{\partial ^{2}f}{\partial
t\partial s}\left( a,c\right) \right\vert ^{q}+C_{2}\left\vert \frac{%
\partial ^{2}f}{\partial t\partial s}\left( a,d\right) \right\vert
^{q}+C_{3}\left\vert \frac{\partial ^{2}f}{\partial t\partial s}\left(
b,c\right) \right\vert ^{q}+C_{4}\left\vert \frac{\partial ^{2}f}{\partial
t\partial s}\left( b,d\right) \right\vert ^{q}}{4}\right) ^{1/q}
\end{eqnarray*}%
where $A_{t}=tb+(1-t)a$, \ $B_{s}=sd+(1-s)c$ and%
\begin{eqnarray*}
C_{1} &=&\;_{2}F_{1}\left( 2q,1;2;1-\frac{a}{b}\right) \;\times
\;_{2}F_{1}\left( 2q,1;2;1-\frac{c}{d}\right) , \\
C_{2} &=&\;_{2}F_{1}\left( 2q,1;2;1-\frac{a}{b}\right) \;\times
\;_{2}F_{1}\left( 2q,2;3;1-\frac{c}{d}\right) , \\
C_{3} &=&\;_{2}F_{1}\left( 2q,2;3;1-\frac{a}{b}\right) \;\times
\;_{2}F_{1}\left( 2q,1;2;1-\frac{c}{d}\right) , \\
C_{4} &=&\;_{2}F_{1}\left( 2q,2;3;1-\frac{a}{b}\right) \;\times
\;_{2}F_{1}\left( 2q,2;3;1-\frac{c}{d}\right) ,
\end{eqnarray*}%
$\beta $ is Euler Beta function defined by%
\begin{equation*}
\beta (x,y)=\frac{\Gamma (x)\Gamma (y)}{\Gamma (x+y)}%
=\int_{0}^{1}t^{x-1}(1-t)^{y-1}dt,\;\;\;x,y>0,
\end{equation*}%
and is $_{2}F_{1}$ is hypergeometric function defined by%
\begin{eqnarray*}
&&_{2}F_{1}\left( a,b;c;z\right) \\
&=&\frac{1}{\beta (b,c-b)}\int_{0}^{1}t^{b-1}(1-t)^{c-b-1}(1-zt)^{-a}dt,\;%
\;c>b>0,\;\left\vert z\right\vert <1\;\text{(see \cite{abramowitz}) .}
\end{eqnarray*}
\end{theorem}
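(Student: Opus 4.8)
The plan is to run the standard Hölder-type argument on the integral identity of Lemma \ref{2.3}. That lemma tells us the bracketed quantity on the left equals
\[
\frac{abcd(b-a)(d-c)}{4}\int_{0}^{1}\int_{0}^{1}\frac{(1-2t)(1-2s)}{(A_{t}B_{s})^{2}}\,\frac{\partial^{2}f}{\partial t\partial s}\!\left(\frac{ab}{A_{t}},\frac{cd}{B_{s}}\right)dt\,ds ,
\]
so passing the absolute value inside via the triangle inequality reduces the task to bounding the integral of $\frac{|1-2t|\,|1-2s|}{(A_{t}B_{s})^{2}}\bigl|\tfrac{\partial^{2}f}{\partial t\partial s}(\tfrac{ab}{A_{t}},\tfrac{cd}{B_{s}})\bigr|$ against the product of the four corner derivatives.

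Next I would apply the Hölder inequality on $[0,1]^{2}$ with conjugate exponents $p,q$ (so $\tfrac1p+\tfrac1q=1$), splitting the integrand so that the polynomial factor $|1-2t|\,|1-2s|$ carries the exponent $p$ while $\frac{1}{(A_{t}B_{s})^{2}}\bigl|\tfrac{\partial^{2}f}{\partial t\partial s}(\tfrac{ab}{A_{t}},\tfrac{cd}{B_{s}})\bigr|$ carries the exponent $q$. The $p$-factor separates and is elementary: since $\int_{0}^{1}|1-2t|^{p}\,dt=\frac{1}{p+1}$, the double integral of $|1-2t|^{p}|1-2s|^{p}$ equals $(p+1)^{-2}$, and raising to the power $1/p$ produces exactly the constant $(p+1)^{-2/p}$ in the statement. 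What remains is the $q$-integral $\int_{0}^{1}\int_{0}^{1}(A_{t}B_{s})^{-2q}\bigl|\tfrac{\partial^{2}f}{\partial t\partial s}(\tfrac{ab}{A_{t}},\tfrac{cd}{B_{s}})\bigr|^{q}\,dt\,ds$.

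For this I would use that $g:=\bigl|\tfrac{\partial^{2}f}{\partial t\partial s}\bigr|^{q}$ is harmonically convex on the co-ordinates. Writing $\frac{ab}{A_{t}}=\bigl(\tfrac{t}{a}+\tfrac{1-t}{b}\bigr)^{-1}$ and $\frac{cd}{B_{s}}=\bigl(\tfrac{s}{c}+\tfrac{1-s}{d}\bigr)^{-1}$, the defining inequality (\ref{1-2}) applied first in the first variable and then in the second gives the bilinear estimate
\[
\left|\frac{\partial^{2}f}{\partial t\partial s}\!\left(\frac{ab}{A_{t}},\frac{cd}{B_{s}}\right)\right|^{q}\le ts\,\Lambda_{ac}+t(1-s)\,\Lambda_{ad}+(1-t)s\,\Lambda_{bc}+(1-t)(1-s)\,\Lambda_{bd},
\]
where $\Lambda_{xy}:=\bigl|\tfrac{\partial^{2}f}{\partial t\partial s}(x,y)\bigr|^{q}$. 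Substituting this and using the product structure $(A_{t}B_{s})^{-2q}=A_{t}^{-2q}B_{s}^{-2q}$ splits the $q$-integral into four products of one-dimensional integrals of the form $\int_{0}^{1}w(t)A_{t}^{-2q}\,dt$ with $w(t)\in\{t,1-t\}$, together with their $s$-analogues.

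Finally each one-dimensional integral is evaluated in closed form. Writing $A_{t}=b\bigl(1-(1-t)(1-\tfrac{a}{b})\bigr)$ and substituting $u=1-t$ turns $\int_{0}^{1}A_{t}^{-2q}\,dt$ and $\int_{0}^{1}(1-t)A_{t}^{-2q}\,dt$ into Euler integrals $\int_{0}^{1}u^{\kappa-1}(1-zu)^{-2q}\,du$ with $z=1-\tfrac{a}{b}$; by the integral representation of ${}_{2}F_{1}$ quoted in the statement these equal ${}_{2}F_{1}(2q,1;2;1-\tfrac{a}{b})$ and $\tfrac12\,{}_{2}F_{1}(2q,2;3;1-\tfrac{a}{b})$ respectively (the normalizers $\beta(1,1)=1$ and $\beta(2,1)=\tfrac12$ entering here), and likewise in $s$ with $z=1-\tfrac{c}{d}$. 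The factors $b^{-2q}$ and $d^{-2q}$ pulled out of these integrals become, after the power $1/q$, the $b^{-2}d^{-2}$ that combines with the $abcd$ of the prefactor to give the $\tfrac{ac}{bd}$ in front. Assembling the four corner contributions then yields the constants $C_{1},\dots,C_{4}$. I expect the only genuinely delicate step to be this last bookkeeping: correctly matching each weight $w(t)\in\{t,1-t\}$ to its corner $\Lambda_{xy}$, keeping the two hypergeometric building blocks and the Beta normalizations straight, and verifying that the collected products simplify to the stated $C_{i}$; the triangle inequality, the Hölder split, and the coordinate convexity step are otherwise routine.
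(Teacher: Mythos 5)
You follow the paper's proof essentially step for step: start from Lemma \ref{2.3} and the triangle inequality, apply H\"{o}lder's inequality on $[0,1]^{2}$ with $|(1-2t)(1-2s)|$ carrying the exponent $p$ (which correctly yields $(p+1)^{-2/p}$, since $\int_{0}^{1}|1-2t|^{p}dt=\tfrac{1}{p+1}$) against $(A_{t}B_{s})^{-2q}\bigl\vert \tfrac{\partial ^{2}f}{\partial t\partial s}\bigl( \tfrac{ab}{A_{t}},\tfrac{cd}{B_{s}}\bigr) \bigr\vert ^{q}$, then use harmonic convexity on the co-ordinates of $\bigl\vert \tfrac{\partial ^{2}f}{\partial t\partial s}\bigr\vert ^{q}$. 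Your matching of the weights $ts$, $t(1-s)$, $(1-t)s$, $(1-t)(1-s)$ to the corners $(a,c)$, $(a,d)$, $(b,c)$, $(b,d)$ is the correct one under (\ref{1-2}) and agrees with the paper's display (\ref{2-4a}), and your bookkeeping of the factors $b^{-2q}$, $d^{-2q}$ producing the $ac/(bd)$ prefactor is right.

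The genuine problem sits exactly in the step you deferred. The four one-dimensional blocks carry the weights $t$ and $1-t$, but the two blocks you evaluate carry the weights $1$ and $1-t$: indeed $\int_{0}^{1}A_{t}^{-2q}dt=b^{-2q}\,{}_{2}F_{1}\bigl(2q,1;2;1-\tfrac{a}{b}\bigr)$ and $\int_{0}^{1}(1-t)A_{t}^{-2q}dt=\tfrac{b^{-2q}}{2}\,{}_{2}F_{1}\bigl(2q,2;3;1-\tfrac{a}{b}\bigr)$ are both correct, yet the block you actually need is
\begin{equation*}
\int_{0}^{1}tA_{t}^{-2q}dt=b^{-2q}\int_{0}^{1}(1-u)(1-zu)^{-2q}du=\beta (1,2)\,b^{-2q}\,{}_{2}F_{1}\left( 2q,1;3;z\right) =\frac{b^{-2q}}{2}\,{}_{2}F_{1}\left( 2q,1;3;z\right) ,\qquad z=1-\frac{a}{b},
\end{equation*}
with third parameter $3$, not $2$; equivalently, subtracting your two blocks gives ${}_{2}F_{1}(2q,1;2;z)-\tfrac{1}{2}\,{}_{2}F_{1}(2q,2;3;z)=\tfrac{1}{2}\,{}_{2}F_{1}(2q,1;3;z)$. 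Hence the ``collected products'' do not simplify to the stated $C_{i}$: the true value of $\int_{0}^{1}\int_{0}^{1}ts(A_{t}B_{s})^{-2q}dtds$ is $\tfrac{1}{4(bd)^{2q}}\,{}_{2}F_{1}\bigl(2q,1;3;1-\tfrac{a}{b}\bigr)\,{}_{2}F_{1}\bigl(2q,1;3;1-\tfrac{c}{d}\bigr)$, and a numerical check with $2q=2$, $a=1$, $b=2$ gives $\int_{0}^{1}t(1+t)^{-2}dt=\ln 2-\tfrac{1}{2}\approx 0.193$, whereas the claimed value $\tfrac{1}{8}\,{}_{2}F_{1}\bigl(2,1;2;\tfrac{1}{2}\bigr)=\tfrac{1}{8}\bigl(1-\tfrac{1}{2}\bigr)^{-1}=\tfrac{1}{4}$. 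You should know that the paper's own proof commits precisely this slip in (\ref{2-4b})--(\ref{2-4d}) (and (\ref{2-4d}) even repeats the integrand $(1-t)(1-s)$ where $(1-t)s$ is meant), so your attempt reproduces the published argument, flaw included. Because the series coefficients of ${}_{2}F_{1}(2q,1;2;z)$ dominate those of ${}_{2}F_{1}(2q,1;3;z)$ termwise for $z\in (0,1)$, the theorem's inequality with the stated $C_{i}$ remains true a fortiori, but the verification you postponed cannot succeed as written: what the method actually yields are the sharper constants with $(2q,1;3;\cdot )$ in place of $(2q,1;2;\cdot )$.
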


\begin{proof}
From Lemma (\ref{2.3}), we have%
\begin{eqnarray*}
&&\left\vert \frac{f(a,c)+f(a,d)+f(b,c)+f(b,d)}{4}+\frac{abcd}{\left(
b-a\right) \left( d-c\right) }\dint\limits_{a}^{b}\dint\limits_{c}^{d}\frac{%
f(x,y)}{\left( xy\right) ^{2}}dxdy\right. \\
&&-\frac{1}{2}\left[ \frac{ab}{b-a}\dint\limits_{a}^{b}\frac{f\left(
x,c\right) }{x^{2}}dx+\frac{ab}{b-a}\dint\limits_{a}^{b}\frac{f\left(
x,d\right) }{x^{2}}dx\right. \\
&&\left. \left. +\frac{cd}{d-c}\dint\limits_{c}^{d}\frac{f\left( a,y\right) 
}{y^{2}}dy+\frac{cd}{d-c}\dint\limits_{c}^{d}\frac{f\left( b,y\right) }{y^{2}%
}dy\right] \right\vert \\
&\leq &\frac{abcd(b-a)(d-c)}{4}\dint\limits_{0}^{1}\dint\limits_{0}^{1}\frac{%
\left\vert (1-2t)(1-2s)\right\vert }{\left( A_{t}B_{s}\right) ^{2}}%
\left\vert \frac{\partial ^{2}f}{\partial t\partial s}\left( \frac{ab}{A_{t}}%
,\frac{cd}{B_{s}}\right) \right\vert dtds.
\end{eqnarray*}%
By using the well known Holder inequality for double integrals, $f:\Delta
\rightarrow 
%TCIMACRO{\U{211d} }%
%BeginExpansion
\mathbb{R}
%EndExpansion
$ is harmonically convex function on the co-ordinates on $\Delta $, then one
has the inequalities:%
\begin{eqnarray*}
&&\left\vert \frac{f(a,c)+f(a,d)+f(b,c)+f(b,d)}{4}+\frac{abcd}{\left(
b-a\right) \left( d-c\right) }\dint\limits_{a}^{b}\dint\limits_{c}^{d}\frac{%
f(x,y)}{\left( xy\right) ^{2}}dxdy\right. \\
&&-\frac{1}{2}\left[ \frac{ab}{b-a}\dint\limits_{a}^{b}\frac{f\left(
x,c\right) }{x^{2}}dx+\frac{ab}{b-a}\dint\limits_{a}^{b}\frac{f\left(
x,d\right) }{x^{2}}dx\right. \\
&&\left. \left. +\frac{cd}{d-c}\dint\limits_{c}^{d}\frac{f\left( a,y\right) 
}{y^{2}}dy+\frac{cd}{d-c}\dint\limits_{c}^{d}\frac{f\left( b,y\right) }{y^{2}%
}dy\right] \right\vert \\
&\leq &\frac{abcd(b-a)(d-c)}{4} \\
&&\times \left( \dint\limits_{0}^{1}\dint\limits_{0}^{1}\left\vert
(1-2t)(1-2s)\right\vert ^{p}dtds\right) ^{1/p}\left(
\dint\limits_{0}^{1}\dint\limits_{0}^{1}\left( A_{t}B_{s}\right)
^{-2q}\left\vert \frac{\partial ^{2}f}{\partial t\partial s}\left( \frac{ab}{%
A_{t}},\frac{cd}{B_{s}}\right) \right\vert ^{q}dtds\right) ^{1/q}
\end{eqnarray*}%
\begin{eqnarray}
&\leq &\frac{abcd(b-a)(d-c)}{4bd\left( p+1\right) ^{2/p}}  \label{2-4a} \\
&&\times \left( \dint\limits_{0}^{1}\dint\limits_{0}^{1}\left(
A_{t}B_{s}\right) ^{-2q}\left( ts\left\vert \frac{\partial ^{2}f}{\partial
t\partial s}\left( a,c\right) \right\vert ^{q}+t(1-s)\left\vert \frac{%
\partial ^{2}f}{\partial t\partial s}\left( a,d\right) \right\vert
^{q}\right. \right.  \notag \\
&&\left. \left. (1-t)s\left\vert \frac{\partial ^{2}f}{\partial t\partial s}%
\left( b,c\right) \right\vert ^{q}+(1-t)(1-s)\left\vert \frac{\partial ^{2}f%
}{\partial t\partial s}\left( b,d\right) \right\vert ^{q}\right) dtds\right)
^{1/q},  \notag
\end{eqnarray}%
where an easy calculation gives 
\begin{eqnarray}
&&\dint\limits_{0}^{1}\dint\limits_{0}^{1}\left( A_{t}B_{s}\right)
^{-2q}tsdtds  \label{2-4b} \\
&=&\frac{1}{4\left( bd\right) ^{2q}}\;\times \;_{2}F_{1}\left( 2q,1;2;1-%
\frac{a}{b}\right) \;\times \;_{2}F_{1}\left( 2q,1;2;1-\frac{c}{d}\right) 
\notag
\end{eqnarray}%
\begin{eqnarray}
&&\dint\limits_{0}^{1}\dint\limits_{0}^{1}\left( A_{t}B_{s}\right)
^{-2q}t(1-s)dtds  \label{2-4c} \\
&=&\frac{1}{4\left( bd\right) ^{2q}}\;\times _{\;2}F_{1}\left( 2q,1;2;1-%
\frac{a}{b}\right) \;\times \;_{2}F_{1}\left( 2q,2;3;1-\frac{c}{d}\right) 
\notag
\end{eqnarray}%
\begin{eqnarray}
&&\dint\limits_{0}^{1}\dint\limits_{0}^{1}\left( A_{t}B_{s}\right)
^{-2q}(1-t)(1-s)dtds  \label{2-4d} \\
&=&\frac{1}{4\left( bd\right) ^{2q}}\;\times \;_{2}F_{1}\left( 2q,2;3;1-%
\frac{a}{b}\right) \;\times \;_{2}F_{1}\left( 2q,1;2;1-\frac{c}{d}\right) 
\notag
\end{eqnarray}%
\begin{eqnarray}
&&\dint\limits_{0}^{1}\dint\limits_{0}^{1}\left( A_{t}B_{s}\right)
^{-2q}(1-t)(1-s)dtds  \label{2-4e} \\
&=&\frac{1}{4\left( bd\right) ^{2q}}\;\times \;_{2}F_{1}\left( 2q,2;3;1-%
\frac{a}{b}\right) \;\times \;_{2}F_{1}\left( 2q,2;3;1-\frac{c}{d}\right) 
\notag
\end{eqnarray}%
Hence, If we use (\ref{2-4b})-(\ref{2-4e}) in (\ref{2-4a}), we obtain the
desired result. This completes the proof.
\end{proof}

\bigskip

\end{document}